\documentclass{amsart}
\usepackage{amsmath,amssymb,enumerate}
\usepackage[mathscr]{eucal}
\usepackage[polish,english]{babel}
\usepackage[cp1250]{inputenc}
\usepackage[T1]{fontenc}
\usepackage{txfonts}

\newtheorem{theorem}{Theorem}[section]
\newtheorem{proposition}[theorem]{Proposition}
\newtheorem{corollary}[theorem]{Corollary}
\newtheorem{lemma}[theorem]{Lemma}
\theoremstyle{definition}
\newtheorem{definition}[theorem]{Definition}

\newtheorem{problem}[theorem]{Problem}
\numberwithin{equation}{section}
\newtheorem{example}[theorem]{Example}



\begin{document}
\title{On subadditive functions bounded above on~a~`large' set}

\keywords{shift--compact set; null--finite set; Haar--null set; Haar--meagre set; subadditive function; local boundedness at a point; WNT--function}
\subjclass[2010]{MSC 39B62, MSC 28C10, MSC 18B30, MSC 54E52}

\author[N.H. Bingham]{Nicholas H. Bingham}
\address{Department of Mathematics, Imperial College London, South Kensington Campus, London, UK, SW7 2AZ}
              \email{n.bingham@imperial.ac.uk}

\author[E. Jab{\l}o\'{n}ska]{Eliza Jab\l o\'nska}
\address{Department of Mathematics, Pedagogical University of Cracow, Podchor\c{a}\.{z}ych~2, 30--084 Krak\'{o}w, Poland}
\email{eliza.jablonska@up.krakow.pl}
\author[W. Jab{\l}o\'{n}ski]{Wojciech Jab\l o\'{n}ski}
\address{Department of Mathematics, Pedagogical University of Cracow, Podchor\c{a}\.{z}ych~2, 30--084 Krak\'{o}w, Poland}
              \email{wojciech.jablonski@up.krakow.pl}
              \author[A.J. Ostaszewski]{Adam J. Ostaszewski}
\address{Mathematics Department, London School of Economics, Houghton Street, London, UK, WC2A 2AEA}
              \email{a.j.ostaszewski@lse.ac.uk}

\date{}

\begin{abstract}
It is well known that boundedness of a subadditive function need not imply its
continuity. Here we prove that each subadditive function $f:X\rightarrow
\mathbb{R}$ bounded above on a~shift--compact (non--Haar--null, non--Haar--meagre) set is
locally bounded at each point of the domain. Our results refer to \cite[Chapter~16]{Kuczma} and papers by N.H.~Bingham and A.J.~Ostaszewski \cite{BO,BinO1,BinO2,BinO6,BinO7}.
\end{abstract}
\maketitle

\section{Motivation and auxiliary results}

Let $X$ be an abelian topological group. A function $f:X\rightarrow \mathbb{R%
}$ is \textit{subadditive} if
\begin{equation*}
f(x+y)\leq f(x)+f(y)\;\;\mbox{for every}\;\;x,y\in X\,\footnote{\,In the paper we do not admit infinite values of $f$. For more information on infinite--valued subadditive functions see \cite[Chapter~16]{Kuczma}.}.
\end{equation*}
A function $f:X\rightarrow \mathbb{R}$ which is subadditive and also
satisfies
\begin{equation*}
f(nx)=nf(x)\;\;\mbox{for every}\;\;x\in X,\;n\in \mathbb{N}
\end{equation*}
is called \textit{sublinear}.

Subadditive and sublinear functions play a fundamental role in mathematics
and so have attracted the interest of many authors (see e.g.~\cite{Berz},
\cite{BO}, \cite{Cooper}, \cite{Hille}, \cite{Kuczma}, \cite{Matkowski},
\cite{Matkowski1}, \cite{Rosenbaum}). Examples of subadditive functions
include norms, seminorms, and the function $\mathbb{R}\ni x \mapsto \sqrt{|x|}\in\mathbb{R}$ (see e.g. \cite[Theorem~7.2.5]{Hille}).  A classical result
concerning subadditive functions is

\begin{theorem}
[{\cite[Theorem 16.2.2]{Kuczma}}]\label{t1} If a subadditive function $f:\mathbb{R}^{N}\rightarrow \mathbb{R}$ is bounded above locally at some
point, then $f$ is locally bounded at each point of~$\mathbb{R}^{N}$.
\end{theorem}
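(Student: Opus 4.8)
The plan is to reduce the hypothesis to a statement about a neighbourhood of the origin, then blow that neighbourhood up by dilation so as to cover all of $\mathbb{R}^N$, and finally convert the resulting upper bound into a genuine two--sided bound.

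First I would record two elementary consequences of subadditivity that get used throughout: putting $x=y=0$ gives $f(0)\le 2f(0)$, hence $f(0)\ge 0$; and $0\le f(0)\le f(x)+f(-x)$, so $f(x)\ge -f(-x)$ for every $x$. Next, assuming $f\le M$ on a neighbourhood $U=x_0+W$ of the given point $x_0$ (with $W$ a neighbourhood of $0$), I would use $f(x)=f((x+x_0)-x_0)\le f(x+x_0)+f(-x_0)\le M+f(-x_0)$ to conclude that $f$ is bounded above, by some constant $M_0$, on the neighbourhood $W$ of $0$.

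The heart of the argument is the dilation step. Choose $\delta>0$ with the ball $B(0,\delta)\subseteq W$. Since subadditivity yields $f(nx)\le nf(x)$ for every $x$ and every $n\in\mathbb{N}$ (immediate induction), any $x$ with $\|x\|<n\delta$ satisfies $x/n\in B(0,\delta)\subseteq W$, so that $f(x)=f\big(n\cdot(x/n)\big)\le n f(x/n)\le nM_0$. As $\mathbb{R}^N=\bigcup_{n\in\mathbb{N}}B(0,n\delta)$, this shows $f$ is bounded above on every bounded subset of $\mathbb{R}^N$; in particular it is bounded above locally at each point. Finally, fixing $y\in\mathbb{R}^N$ and $r>0$, pick $C$ with $f\le C$ on the bounded set $-B(y,r)=B(-y,r)$; for $x\in B(y,r)$ we then have $-x\in B(-y,r)$, whence $f(-x)\le C$ and the inequality $f(x)\ge -f(-x)$ gives $f(x)\ge -C$. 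Combined with an upper bound for $f$ on $B(y,r)$ (again from the dilation step) this yields $|f|$ bounded on $B(y,r)$, i.e.\ $f$ is locally bounded at $y$.

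I expect the dilation step to be the only real point: it is precisely the feature of $\mathbb{R}^N$ that the integer dilates of any neighbourhood of $0$ exhaust the space (equivalently, that every such neighbourhood is absorbing) which upgrades a single local upper bound to boundedness above on all bounded sets; no compactness and no regularity of $f$ beyond subadditivity is required. Once this is in hand, the passage from ``bounded above'' to ``bounded'' is the soft inequality $0\le f(x)+f(-x)$ applied as above.
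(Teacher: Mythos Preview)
Your argument is correct, and it is essentially the classical proof from Kuczma's book (which the paper cites as the source of Theorem~\ref{t1}). However, the paper does \emph{not} reprove Theorem~\ref{t1} this way: instead it deduces Theorem~\ref{t1} as Corollary~\ref{c1} of the main Theorem~\ref{main}, whose proof is by contradiction via sequences and shift--compactness. Concretely, the paper assumes $f$ fails to be locally bounded at some $x_0$, extracts a sequence $y_n\to y_0$ with $f(y_n)>n$ (passing to $-x_n$ if necessary, using $f(-x)\ge -f(x)$), and then, since open sets are shift--compact, finds $z_0$ with $z_0+y_n-y_0$ in the given neighbourhood for infinitely many $n$; the estimate $n<f(y_n)\le f(y_n-y_0+z_0)+f(y_0-z_0)\le M+f(y_0-z_0)$ then gives the contradiction. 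Your dilation argument exploits the absorbing property of neighbourhoods of $0$ in $\mathbb{R}^N$ and in fact yields the stronger conclusion that $f$ is bounded on every bounded set; the paper's sequential argument uses only the group structure and the notion of shift--compactness, which is precisely what allows it to generalise verbatim to arbitrary abelian metric groups (Corollary~\ref{c1}), where no dilation is available.
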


Note that local boundedness of a subadditive function does not imply its
continuity; any function having values in the interval $[a,2a]$, with $a>0$, is subadditive.

A result stronger than Theorem~\ref{t1} is the following.

\begin{theorem}
[{\cite[Theorem 16.2.4]{Kuczma}}]\label{t2} If a subadditive function $f:
\mathbb{R}^{N}\rightarrow \mathbb{R}$ is bounded above on a set $T\subset
\mathbb{R}^{N}$ and such that its $k$--fold sum
$\sum_{i=1}^kT$ has positive inner Lebesgue measure or is non--meagre for some $k\in \mathbb{N}$, then $f$ is locally bounded at each point of $\mathbb{R}^{N}$.
\end{theorem}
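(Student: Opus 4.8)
The plan is to reduce Theorem~\ref{t2} to Theorem~\ref{t1} by showing that a suitable sumset built from $T$ has nonempty interior. Suppose $f\le M$ on $T$. By subadditivity, $f(x_{1}+\dots+x_{j})\le f(x_{1})+\dots+f(x_{j})\le jM$ whenever $x_{1},\dots,x_{j}\in T$, so $f$ is bounded above by $jM$ on the $j$--fold sumset $\sum_{i=1}^{j}T$ for every $j\in\mathbb{N}$. Fix the $k$ provided by the hypothesis, put $S:=\sum_{i=1}^{k}T$ (so $f\le kM$ on $S$), and note that $S+S=\sum_{i=1}^{2k}T$, whence $f\le 2kM$ on $S+S$. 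If $\inn(S+S)\ne\emptyset$, then $f$ is bounded above on the nonempty open set $\inn(S+S)$, hence bounded above locally at each of its points, and Theorem~\ref{t1} yields that $f$ is locally bounded at every point of $\mathbb{R}^{N}$.

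So the heart of the matter is a Steinhaus--Piccard type assertion: a `large' set has a sumset with nonempty interior. In the measure case, choose a compact $A\subseteq S$ with $0<\lambda(A)<\infty$ (possible since $S$ has positive inner Lebesgue measure); the convolution $\mathbf{1}_{A}*\mathbf{1}_{A}$ (of $\mathbf{1}_{A}\in L^{1}$ with $\mathbf{1}_{A}\in L^{\infty}$) is bounded and continuous with $\int\mathbf{1}_{A}*\mathbf{1}_{A}=\lambda(A)^{2}>0$, so it is strictly positive on a nonempty open set $V$, and for $x\in V$ we have $\lambda\bigl(A\cap(x-A)\bigr)>0$, hence $x\in A+A\subseteq S+S$; thus $V\subseteq S+S$. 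In the category case, use the Baire property to write $S=U\triangle P$ with $U$ open (and nonempty, as $S$ is non--meagre) and $P$ meagre; for each $p\in U+U$ the open set $U\cap(p-U)$ is nonempty, and since $P\cup(p-P)$ is meagre it omits some $y\in U\cap(p-U)$, so that $y,\,p-y\in U\setminus P\subseteq S$ and $p=y+(p-y)\in S+S$; hence the nonempty open set $U+U$ is contained in $S+S$.

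The main obstacle is exactly this Steinhaus--Piccard step, combined with one structural observation: since subadditivity is one--sided, we are forced to work with the sumset $S+S$ rather than the difference set $S-S$ --- for $x=s+s'\in S+S$ one gets $f(x)\le f(s)+f(s')\le 2kM$, whereas for $x=s-s'$ the term $f(-s')$ is not controlled by the hypothesis. A secondary point requiring a little care is the passage, under the `large set' hypothesis, to a genuinely measurable subset of finite positive measure (respectively, to the open kernel furnished by the Baire property) before the Steinhaus--Piccard argument can be applied; once that is arranged, what remains is the routine bookkeeping with subadditivity and the single appeal to Theorem~\ref{t1} indicated above.
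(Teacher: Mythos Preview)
Your argument is essentially the classical route to Theorem~\ref{t2}: push the upper bound from $T$ to the $2k$--fold sumset by subadditivity, invoke a Steinhaus--Piccard theorem to get interior in $S+S$, and then appeal to Theorem~\ref{t1}. The measure case is clean. In the category case there is a gap: you write $S=U\triangle P$ ``using the Baire property'', but the hypothesis only says $S$ is non--meagre, not that it has the Baire property, so Piccard's theorem is not directly available. (This is not a mere technicality: without some regularity the sumset need not have interior. Note that the paper's own Corollary~\ref{c2} also carries a ``universally Baire'' hypothesis in the category case.)

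The paper takes a genuinely different route. It does not prove Theorem~\ref{t2} via Steinhaus--Piccard and Theorem~\ref{t1}; instead it proves the more general Theorem~\ref{main} by a direct sequential argument using only shift--compactness. Assuming $f$ is not locally bounded at some $x_0$, one extracts $y_n\to y_0$ with $f(y_n)>n$, applies shift--compactness of $\sum_{i=1}^{k}T$ to the null sequence $y_n-y_0$ to find $z_0$ with $z_0+y_n-y_0\in\sum_{i=1}^{k}T$ for infinitely many $n$, and then
\[
n<f(y_n)\le f(y_n-y_0+z_0)+f(y_0-z_0)\le kM+f(y_0-z_0)
\]
gives a contradiction. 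Theorem~\ref{t2} is then recovered (modulo the regularity caveat above) because sets of positive inner measure and non--meagre Baire sets in $\mathbb{R}^{N}$ are shift--compact (Theorem~\ref{HN+HM}). Your approach stays entirely inside the Lebesgue/Baire framework and delivers the stronger intermediate conclusion that $S+S$ actually has nonempty interior; the paper's approach bypasses Steinhaus--Piccard altogether, works verbatim in any abelian metric group, and isolates shift--compactness as the single ``largeness'' condition driving the result.
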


Below we generalize the two theorems above using the notion of a
shift--compact set (see \cite[III.2]{Par}, \cite[5.1]{Hey}).

\begin{definition}
In an abelian topological group $X$, a set $A\subset X$ is called \textbf{shift--compact} if for every sequence $(x_{n})_{n\in \mathbb{N}}$ tending to
$0$ in $X$ there exists $x\in X$ such that the set $\{n\in \mathbb{N}:x+x_{n}\in A\}$ is infinite.
\end{definition}

Shift--compact sets were used in the context of subadditive functions for the
first time by N.H.~Bingham and A.J.~Ostaszewski in \cite{BinO2}. In the
Euclidean context of \cite{BinO1} a~shift--compact set was earlier called
subuniversal (following H.~Kestelman \cite{Kes}), and also null--shift--compact
(in \cite{BinO4} in the special context of $\mathbb{R}$). The underlying
group action is studied in \cite{MilO}.

The notion of a shift--compact set is directly equivalent to the notion of a
\textit{non}--null--finite set. The definition of null-finite sets was introduced by T.~Banakh and E.~Jab{\l}o\'{n}ska in \cite{BJ} (see also \cite{Kwela}). Using \cite[Proposition~2.2]{BJ} we can easily explain the name "shift--compact": a set is shift--compact if and only if infinitely many points of this set belong to a translation of any infinite compact set. It is clear that non--empty open sets are shift--compact. Moreover, in view of \cite[Theorem~3.1~(2)]{BJ}, countable sets in non--discrete metric groups are not shift--compact. The result of A.~Kwela \cite[Theorem~4.1]{Kwela}, that the Cantor set is not shift--compact, seems to be of significant interest.

To indicate the extent of shift--compact sets we need to recall two terms: Haar--null sets, as defined by
J.P.R.~Christensen in \cite{Ch}, and their topological analogue the
Haar--meagre sets, as defined by U.B.~Darji \cite{Darji}.

\begin{definition} Let $X$ be a complete abelian metric group.
A set $A\subset X$ is called \textbf{universally measurable}\,\footnote{\,See e.g. \cite[p. 227]{Kechris}.} if it
is measurable with respect to each complete Borel probability measure on $X$. A~universally measurable set $B\subset X$ is \textbf{Haar--null}
if there exists a $\sigma $--additive probability Borel measure $\mu$ on $X$
such that $\mu (B+x)=0$ for all $x\in X$.
\end{definition}

Christensen \cite{Ch} proves that in each locally compact complete abelian
metric group the notion of a Haar--null set is equivalent to the notion of a
set of Haar measure zero.

\begin{definition} Let $X$ be a complete abelian metric group.
A set $A\subset X$ is called \textbf{universally Baire}\,\footnote{\,See e.g. \cite{Magidor}.} if for each
continuous function $f:K\rightarrow X$ mapping a compact metric space $K$
into $X$ the set $f^{-1}(A+x)$ has the Baire property for every $x\in X$. A~universally Baire\,set $B\subset X$
is called \textbf{Haar--meagre} if
there exists a~continuous map $f:K\rightarrow X$ from a non--empty compact
metric space $K$ such that the set $f^{-1}(B+x)$ is meagre in $K$ for every $x\in X$.
\end{definition}

Darji \cite{Darji} shows that in each locally compact complete abelian
metric group the notion of a Haar--meagre set is equivalent to the notion of
a meagre set. However, in the non--locally compact case, there is only a
one--sided inclusion: Haar--meagre sets are meagre, but the converse
implication may fail.

Armed with these terms, we may now note (see \cite[Theorems 5.1 and 6.1]{BJ}
and also \cite[Theorem 3]{BinO8}) that

\begin{theorem}
\label{HN+HM} In a complete abelian metric group\,\footnote{\,Here by a \textit{metric group} we mean a group with an invariant metric; by the Birkhoff--Kakutani theorem (see e.g. \cite[Theorem~9.1.]{Kechris}) any metrizable topological group is a metric group.}:
\begin{itemize}
\item[{(i)}] each universally Baire non--Haar--meagre set is
shift--compact;
\item[{(ii)}] each universally measurable non--Haar--null set is
shift--compact.
\end{itemize}
\end{theorem}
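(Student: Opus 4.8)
The plan is to prove both statements by contraposition: assuming $A$ is \emph{not} shift--compact — equivalently, that $A$ is null--finite — I will produce in case~(ii) a Borel probability measure witnessing that $A$ is Haar--null, and in case~(i) a continuous map from the Cantor cube witnessing that $A$ is Haar--meagre. So fix a sequence $x_{n}\to 0$ with $\{n:y+x_{n}\in A\}$ finite for every $y\in X$. We may assume $A\neq\emptyset$ (otherwise the conclusions are trivial); then, for $y\in A$, finiteness of $\{n:y+x_{n}\in A\}$ forces $x_{n}=0$ for only finitely many $n$. Discarding those terms, and using that any subsequence of a null--finite witness is again a null--finite witness, we may pass to a subsequence with $\sum_{n}d(x_{n},0)<\infty$ for an invariant metric $d$ on $X$ (Birkhoff--Kakutani). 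By completeness of $X$, the series $\sum_{n}\varepsilon_{n}x_{n}$ then converges for every $(\varepsilon_{n})\in\{0,1\}^{\mathbb{N}}$, uniformly in $(\varepsilon_{n})$, so
\[
\phi\colon\{0,1\}^{\mathbb{N}}\to X,\qquad \phi\big((\varepsilon_{n})_{n}\big)=\sum_{n}\varepsilon_{n}x_{n},
\]
is a continuous map from a non--empty compact metric space. The combinatorial core is this: writing $\omega^{(N)}$ for $\omega=(\varepsilon_{n})$ with its $N$--th coordinate switched, one has $\phi(\omega^{(N)})=\phi(\omega)+x_{N}$ whenever $\varepsilon_{N}=0$; hence, fixing $x\in X$ and setting $E:=\phi^{-1}(A+x)$, for each $\omega\in E$ the set $J(\omega):=\{N:\varepsilon_{N}(\omega)=0,\ \omega^{(N)}\in E\}$ is finite, because $N\in J(\omega)$ forces $(\phi(\omega)-x)+x_{N}\in A$ and $\{N:(\phi(\omega)-x)+x_{N}\in A\}$ is finite by the witness property applied at $y=\phi(\omega)-x$.

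For part~(i): since $A$ is universally Baire and $\phi$ is continuous, $E$ has the Baire property. If $E$ were non--meagre it would be comeagre in a basic clopen cylinder $[\sigma]$ fixing the first $k$ coordinates. For every $N>k$ the flip $\omega\mapsto\omega^{(N)}$ is a category--preserving self--homeomorphism of $[\sigma]$, so $\{\omega\in[\sigma]:\omega^{(N)}\in E\}$ is comeagre in $[\sigma]$; intersecting these sets over all $N>k$ with $E\cap[\sigma]$ and with the (comeagre) set of $\omega$ having infinitely many zero coordinates, the Baire category theorem produces $\omega\in E\cap[\sigma]$ with $\omega^{(N)}\in E$ for all $N>k$ satisfying $\varepsilon_{N}(\omega)=0$ — infinitely many $N$ — contradicting finiteness of $J(\omega)$. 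Hence $\phi^{-1}(A+x)$ is meagre for every $x$, so $\phi$ witnesses that $A$ is Haar--meagre, and contraposition gives~(i).

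For part~(ii): let $P$ be the coin--tossing product measure on $\{0,1\}^{\mathbb{N}}$ and $\mu:=\phi_{*}P$, a Borel probability measure on $X$; since $A$ is universally measurable, $E=\phi^{-1}(A+x)$ is $P$--measurable and $\mu(A+x)=P(E)$. Suppose $P(E)>0$. Writing $E_{m}:=\{\omega\in E:|J(\omega)|\le m\}$ we have $E=\bigcup_{m}E_{m}$, so some $E_{m}$ has $P(E_{m})>0$; by a density argument (L\'evy's martingale convergence theorem for the cylinder filtration) fix a cylinder $[\sigma]$ on $k$ coordinates with $P(E_{m}\cap[\sigma])>(1-\varepsilon)P([\sigma])$ for a fixed $\varepsilon<\tfrac14$. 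For each $N>k$, measure--preservation of the flip on $[\sigma]$ yields
\[
P\big(\{\omega\in E_{m}\cap[\sigma]:\varepsilon_{N}(\omega)=0,\ \omega^{(N)}\in E\}\big)>(\tfrac12-2\varepsilon)\,P([\sigma]);
\]
on the other hand, summing the left--hand quantity over $k<N\le k+L$ counts, for each $\omega$, only the indices lying in $J(\omega)$, hence at most $m$ of them, so the total is $\le m\,P([\sigma])$. Letting $L\to\infty$ is absurd, so $\mu(A+x)=0$ for every $x$, i.e. $A$ is Haar--null, and contraposition gives~(ii).

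I expect the main obstacle to be the density/averaging estimate in~(ii): isolating the almost--full cylinder $[\sigma]$ and converting it into the lower bound $(\tfrac12-2\varepsilon)P([\sigma])$ \emph{uniformly} in $N$, arranged so that the cap $|J(\omega)|\le m$ genuinely collides with $L\to\infty$. Part~(i) is comparatively soft, resting only on the Baire category theorem and on coordinate--flips being category--preserving homeomorphisms of a cylinder; the one subtlety there is to run all the genericity arguments inside $[\sigma]$ rather than in the whole cube.
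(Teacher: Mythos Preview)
Your argument is correct. Note, however, that the paper does not prove this theorem at all: it is quoted from the literature (Banakh--Jab{\l}o\'nska \cite[Theorems~5.1 and~6.1]{BJ} and Bingham--Ostaszewski \cite[Theorem~3]{BinO8}), so there is no in-paper proof to compare against. Your contrapositive approach---encoding a null--finite witness $(x_n)$ with $\sum d(x_n,0)<\infty$ as a continuous map $\phi:\{0,1\}^{\mathbb N}\to X$, then exploiting coordinate--flip invariance together with the finiteness of $J(\omega)$---is exactly the technique used in \cite{BJ}, so in spirit you have reconstructed the cited proof.

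A few small remarks on the write-up. In~(ii), you should record explicitly that $E_m$ is $P$--measurable: the flip $T_N$ is a measure-preserving Borel automorphism, so $T_N^{-1}(E)$ lies in the $P$--completion, whence $|J(\omega)|=\sum_N\mathbf 1_{\{\varepsilon_N=0\}\cap T_N^{-1}(E)}(\omega)$ is $P$--measurable. The inclusion--exclusion giving the $(\tfrac12-2\varepsilon)P([\sigma])$ lower bound deserves one line: with $F_1=E_m\cap[\sigma]$, $F_2=\{\varepsilon_N=0\}\cap[\sigma]$, $F_3=T_N^{-1}(E)\cap[\sigma]$ one has $P(F_1\cap F_2\cap F_3)\ge P(F_1)+P(F_2)+P(F_3)-2P([\sigma])$. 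Finally, for~(i) you invoke ``universally Baire $\Rightarrow$ $\phi^{-1}(A+x)$ has the Baire property''; this is immediate from the paper's definition of universally Baire, which is phrased precisely in terms of continuous maps from compact metric spaces, so no extra work is needed.
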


The converse of Theorem~\ref{HN+HM} does not hold: see \cite[Theorem 12]{CieR}, \cite[Example~7.1]{BJ} and also \cite{MilMO}.

It emerges that in non--locally compact complete abelian metric groups there
exist sets which are neither Haar--null nor Haar--meagre and with $k$--fold
sums that are meagre for each $k\in \mathbb{N}$.  An example is provided by
the positive cone $C:=\{(x_{n})_{n\in \mathbb{N}}\in c_{0}:x_{n}\geq 0\;\mbox{for each}\;n\in \mathbb{N}\}$ in the space $c_{0}$ (of all real
sequences tending to zero); this is neither Haar--meagre nor Haar--null, $C=\sum_{i=1}^kC$ for each $k\in \mathbb{N}$ and $C$ is nowhere dense in $X$ (see
\cite{Jab}, \cite{MZaj}). By the Steinhaus--Pettis--Piccard Theorem (see
\cite{Pet}, \cite{Pic}, \cite{Stein} or \cite[Theorems~2.9.1, 3.7.1]{Kuczma}), such a situation is not possible in the case of locally compact abelian
Polish groups, where the families of Haar--meagre sets and of meagre sets coincide, and likewise the families of Haar--null sets and of sets of Haar measure zero coincide. This motivates the following:

\begin{problem}\label{el}
For $X$ a complete abelian metric group and $f:X\rightarrow \mathbb{R}$
a~subadditive function, bounded above on a set $T\subset X$ with a $k$--fold sum $\sum_{i=1}^kT$ either universally Baire and
non--Haar--meagre, or universally measurable and non--Haar--null, is $f$ locally bounded at each point of $X$?
\end{problem}

Below we give an affirmative answer. Actually, we prove a~more general
result: we show that for $X$ an abelian metric group, every subadditive
function $f:X\rightarrow \mathbb{R}$ that is bounded above on a shift--compact subset of $X$ is necessarily locally bounded at each point
of~$X$.

Analogous results for additive functions as well as mid--point convex
functions (i.e. functions satisfying
\begin{equation*}
f((x+y)/2)\leq (f(x)+f(y))/2
\end{equation*}
for
every $x,y$ from the domain of $f$) were obtained in \cite[Theorems 9.1 and
11.1]{BJ}, and also in \cite[$§$7 and Theorem 1]{BinO1} in the case of $\mathbb{R}$ (cf. \cite[$§$10 Convexity]{BinO2}), where the following two
results were proved.  (We use `shift--compact' in place of `non--null--finite' as in [1]; see \S 1.)

\begin{theorem}
[{\cite[Theorem 9.1]{BJ}}]\label{tBJ} Each additive functional $f:X\rightarrow \mathbb{R}$ on an abelian metric group $X$ is bounded above
on a shift--compact set in $X$ if and only if it is continuous.
\end{theorem}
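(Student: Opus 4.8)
The plan is to prove the two implications separately; the forward one is routine and the reverse one carries the real content. For \emph{continuity $\Rightarrow$ boundedness above on a shift--compact set}: additivity gives $f(0)=0$, and continuity at $0$ yields a neighbourhood of $0$ on which $|f|<1$; this neighbourhood contains an open ball about $0$, which (as recorded in the excerpt) is shift--compact, and $f$ is bounded, hence bounded above, on it.

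For the converse, suppose $f\le M$ on a shift--compact set $A\subseteq X$. Since $f$ is additive, $f(u)-f(v)=f(u-v)$, so it is enough to prove continuity of $f$ at $0$ (the displayed identity then upgrades this to continuity on all of $X$). Assume continuity at $0$ fails. Then there are a sequence $x_{n}\to 0$ and an $\varepsilon>0$ with $|f(x_{n})|\ge\varepsilon$, and in particular $x_{n}\ne 0$, for all $n$.

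The key step is an \emph{amplification}. Fix an invariant metric $d$ on $X$ (available by Birkhoff--Kakutani, as noted after Theorem~\ref{HN+HM}); invariance and the triangle inequality give $d(kx,0)\le k\,d(x,0)$ for all $x\in X$ and $k\in\mathbb{N}$. Choose integers $k_{n}\to\infty$ growing slowly enough that $k_{n}\,d(x_{n},0)\to 0$ (for instance $k_{n}=\lfloor d(x_{n},0)^{-1/2}\rfloor$, which is eventually $\ge 1$); then $k_{n}x_{n}\to 0$, while $|f(k_{n}x_{n})|=k_{n}|f(x_{n})|\ge k_{n}\varepsilon\to\infty$ by additivity. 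Passing to a subsequence along which $f(k_{n}x_{n})$ keeps a constant sign, and --- if that sign is negative --- replacing $k_{n}x_{n}$ by $-k_{n}x_{n}=k_{n}(-x_{n})$ (still tending to $0$, with $f$ changing sign), we obtain a sequence $z_{n}\to 0$ with $f(z_{n})\to+\infty$.

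It remains to reach a contradiction. Apply the definition of shift--compactness to $(z_{n})$: there is $x\in X$ such that $\{n:x+z_{n}\in A\}$ is infinite. For every such $n$ we have $f(x)+f(z_{n})=f(x+z_{n})\le M$, hence $f(z_{n})\le M-f(x)$; but $f(z_{n})\to+\infty$ along this infinite set, a contradiction. Therefore $f$ is continuous at $0$, and hence everywhere. I expect the amplification step --- checking that the blown--up sequence still converges to $0$ while its $f$--values diverge, together with the sign bookkeeping --- to be the only delicate point; everything else follows immediately from additivity and the definitions.
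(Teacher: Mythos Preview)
The paper does not supply its own proof of this theorem; it is quoted verbatim from \cite[Theorem~9.1]{BJ} and used as background. Your argument is correct, and its final step---producing a null sequence $(z_n)$ with $f(z_n)\to+\infty$ and then using shift--compactness to trap infinitely many $x+z_n$ in the set where $f\le M$---is exactly the device the paper employs in its proof of Theorem~\ref{main}. The extra ingredient you need beyond Theorem~\ref{main} is the amplification step, and you handle it properly: the invariance inequality $d(kx,0)\le k\,d(x,0)$ keeps $k_n x_n\to 0$, while additivity gives $f(k_n x_n)=k_n f(x_n)$, so the values blow up. (Note that if some $x_n$ has finite order $m$, then $mf(x_n)=f(mx_n)=f(0)=0$ forces $f(x_n)=0$, contradicting $|f(x_n)|\ge\varepsilon$; so no torsion obstruction arises.)

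An equivalent route, closer to the paper's internal logic, would be: additive $\Rightarrow$ subadditive, invoke Theorem~\ref{main} to get local boundedness at $0$, and then run your amplification argument once more to upgrade ``$|f|$ bounded near $0$'' to ``$f(x_n)\to 0$ whenever $x_n\to 0$''. Both approaches rest on the same two ideas (shift--compactness plus amplification), just composed in a different order.
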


\begin{theorem}
[{\cite[Theorem 11.1]{BJ}}]\label{tBJ1} Each mid--point convex function $f:X\rightarrow \mathbb{R}$ defined on a real linear metric space $X$ is
bounded above on a shift--compact set in $X$ if and only if it is
continuous.
\end{theorem}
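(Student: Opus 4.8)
\medskip
\noindent\textbf{Proof proposal.}
The plan is to prove the two implications separately; the backward one is a formality and the forward one carries all the content. For $(\Leftarrow)$: if $f$ is continuous then, by continuity at $0$, there is an open neighbourhood $U$ of $0$ with $f<f(0)+1$ on $U$, and a non-empty open set is shift-compact (\S1), so $f$ is bounded above on a shift-compact set. For $(\Rightarrow)$ I would follow the classical route behind the Bernstein--Doetsch theorem: first reduce the problem to producing \emph{one} point at which $f$ is bounded above on a neighbourhood, and then bootstrap this single local bound into continuity on all of $X$.

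The bootstrap step uses only midpoint convexity, the linear structure of $X$, and the joint continuity of scalar multiplication. Suppose $f\le M$ on a ball $B(x_{0},r)$ and let $x_{1}\in X$. Put $z:=2x_{1}-x_{0}$; the map $y\mapsto 2y-z$ is continuous and sends $x_{1}$ to $x_{0}$, so $2y-z\in B(x_{0},r)$ on some neighbourhood $V$ of $x_{1}$, and the identity $y=\tfrac12\bigl((2y-z)+z\bigr)$ with midpoint convexity gives $f(y)\le\tfrac12\bigl(M+f(z)\bigr)$ on $V$. Thus $f$ is bounded above near every point; using instead $x_{1}=\tfrac12\bigl(y+(2x_{1}-y)\bigr)$ yields, on a smaller neighbourhood of $x_{1}$, a bound of the form $f(y)\ge 2f(x_{1})-\tfrac12\bigl(M+f(z)\bigr)$, so $f$ is actually locally bounded (both sides) throughout $X$. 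A classical argument (Bernstein--Doetsch; cf.\ \cite{Kuczma}) then shows that a midpoint convex function that is locally bounded is continuous. Hence everything reduces to exhibiting one neighbourhood on which $f$ is bounded above.

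For this last step --- the crux --- suppose, for contradiction, that $f$ is bounded above on no neighbourhood of any point; in particular choose $z_{n}\to 0$ with $f(z_{n})\ge n$. Applying shift-compactness of $A$ to the null sequence $(z_{n})$ gives $x\in X$ and an infinite $S\subseteq\mathbb{N}$ with $x+z_{n}\in A$, hence $f(x+z_{n})\le M$, for all $n\in S$; the midpoint identity $z_{n}=\tfrac12\bigl((x+z_{n})+(-x+z_{n})\bigr)$ then forces $f(-x+z_{n})\ge 2f(z_{n})-M\to+\infty$ along $S$. This is exactly the point at which the argument must go beyond the additive case of Theorem~\ref{tBJ}: there the identity $f(x+z_{n})=f(x)+f(z_{n})$ bounds $f(z_{n})$ at once, whereas midpoint convexity supplies only a one-sided inequality carrying a \emph{second} unknown value, so a single appeal to shift-compactness merely \emph{relocates} the unboundedness (from $0$ to $-x$) rather than contradicting anything. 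The plan for closing the loop is to iterate: reapplying shift-compactness along the subsequence $(z_{n})_{n\in S}$ and along translates $A-x$ of $A$ to trap two translated null sequences $x+z_{n}$ and $x'+z_{n}$ (with $x'\ne x$) inside $A$ for the same infinitely many $n$, then using midpoint convexity to move the exploding values of $f$ around among points of the form $\xi+z_{n}$ (with $\xi$ a fixed vector) while holding other such points below $M$, the aim being to reach after finitely many steps an honest neighbourhood on which $f$ is bounded above --- contradicting the standing assumption. Carrying out this bookkeeping --- which amounts to verifying that a midpoint convex function is a WNT-function in the sense of Bingham and Ostaszewski \cite{BinO2}, via the combinatorics of shift-compact sets established in \cite{BJ} --- is the step I expect to be the main obstacle. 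Once a neighbourhood of local upper boundedness is in hand, the bootstrap step above finishes the proof.
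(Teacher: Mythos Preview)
The paper does not prove this theorem at all; it is simply quoted from \cite[Theorem~11.1]{BJ}, so there is no in--paper argument to compare against. What remains is to assess your attempt on its own.

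Your reduction is sound: the backward implication is immediate, and the bootstrap step (Bernstein--Doetsch in a real linear metric space) correctly reduces the forward implication to finding \emph{one} point near which $f$ is bounded above. The gap is exactly where you locate it yourself. Having chosen $z_{n}\to 0$ with $f(z_{n})>n$ and applied shift--compactness to obtain $x$ with $x+z_{n}\in A$ along an infinite $S$, your midpoint decomposition
\[
z_{n}=\tfrac12\bigl((x+z_{n})+(-x+z_{n})\bigr)
\]
has \emph{both} summands depending on $n$, so the inequality $f(z_{n})\le\tfrac12\bigl(M+f(-x+z_{n})\bigr)$ only transfers the blow--up to $-x+z_{n}$. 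Your proposed cure --- iterate shift--compactness to catch a second translate $x'+z_{n}$ in $A$ along the \emph{same} infinite set of indices --- is not something shift--compactness delivers: each application hands you some $x$ and some infinite index set, with no control linking different applications. As written, the loop does not close.

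The missing idea is a one--line change: apply shift--compactness to the null sequence $(2z_{n})$ rather than $(z_{n})$. This yields $x\in X$ and an infinite $S\subset\mathbb{N}$ with $x+2z_{n}\in A$, hence $f(x+2z_{n})\le M$, for $n\in S$. Now use the decomposition
\[
z_{n}=\tfrac12\bigl((x+2z_{n})+(-x)\bigr),
\]
in which the second summand is \emph{constant}. Midpoint convexity gives
\[
f(z_{n})\ \le\ \tfrac12\bigl(f(x+2z_{n})+f(-x)\bigr)\ \le\ \tfrac12\bigl(M+f(-x)\bigr)\qquad(n\in S),
\]
a fixed finite bound, contradicting $f(z_{n})>n$. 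Thus $f$ is bounded above on some neighbourhood of $0$, and your bootstrap finishes the proof. No iteration, no WNT machinery, is needed.
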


Since each sublinear function $f:X\rightarrow \mathbb{R}$ defined on a real
linear space $X$ is necessarily mid--point convex (see \cite[Lemma 16.1.11]{Kuczma}), from Theorem~\ref{tBJ1} we obtain the

\begin{corollary}
\label{lin} Each sublinear function $f:X\rightarrow \mathbb{R}$ defined on a
real linear metric space $X$ is bounded above on a shift--compact set in $X$ if and only if it is continuous.
\end{corollary}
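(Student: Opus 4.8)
The plan is to derive both implications directly from Theorem~\ref{tBJ1}, together with the elementary observation that sublinearity (in fact already subadditivity) forces mid--point convexity.

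For the ``only if'' direction, suppose $f:X\rightarrow\mathbb{R}$ is sublinear and bounded above on a shift--compact set $A\subset X$. By \cite[Lemma~16.1.11]{Kuczma} every sublinear function on a real linear space is mid--point convex; this also follows at once from the definition, since for $x,y\in X$ subadditivity gives $2f((x+y)/2)=f(x+y)\leq f(x)+f(y)$, whence $f((x+y)/2)\leq(f(x)+f(y))/2$. As $X$ is a real linear metric space, Theorem~\ref{tBJ1} applies to $f$ and yields that $f$ is continuous.

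For the ``if'' direction, assume $f$ is continuous. Then $f$ is bounded above on a neighbourhood of $0$: choosing, by continuity at $0$, an open ball $U$ about $0$ with $f(U)\subset(f(0)-1,f(0)+1)$, we get $\sup_{U}f\leq f(0)+1<\infty$. As noted after the definition of shift--compactness, every non--empty open subset of $X$ is shift--compact, so $U$ is a shift--compact set on which $f$ is bounded above. This establishes the equivalence.

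There is essentially no obstacle beyond invoking the correct results: the substantive content is entirely contained in Theorem~\ref{tBJ1}. The only points requiring a word of justification are the passage from sublinearity to mid--point convexity, which is immediate from the defining inequality, and the remark that the hypotheses of Theorem~\ref{tBJ1}---namely that the domain be a real linear metric space---coincide with those assumed here, so that no additional structure need be supplied.
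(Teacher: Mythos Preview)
Your argument is correct and coincides with the paper's, which simply observes that sublinear functions are mid-point convex (via \cite[Lemma~16.1.11]{Kuczma}) and then applies Theorem~\ref{tBJ1} directly; you merely spell out the easy ``if'' direction that the paper leaves implicit in the biconditional of Theorem~\ref{tBJ1}. One small correction: your parenthetical remark that \emph{subadditivity alone} already forces mid-point convexity is false (for instance $x\mapsto\sqrt{|x|}$ on $\mathbb{R}$ is subadditive but not mid-point convex); the equality $2f((x+y)/2)=f(x+y)$ you invoke uses the $\mathbb{N}$-homogeneity part of sublinearity, not subadditivity, so the attribution in that sentence should be adjusted.
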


The result above in the case $X=\mathbb{R}$ was obtained by N.H.~Bingham
and A.J.~Ostaszewski in \cite[Proposition 5]{BinO6} (cf. \cite[Theorem R]{BinO7} and \cite[Proposition 5]{BinO9}).

This is also why boundedness from above of subadditive functions on
shift--compact sets seems to be all the more interesting.

Finally, we determine the relationship between local boundedness at some
point, boundedness from above on a shift--compact set and property WNT, as
proposed by N.H. Bingham and A.J. Ostaszewski in \cite{BO}, in the class of
functions $f:X\rightarrow \mathbb{R}$ defined on an abelian topological
group $X$.

\section{Main results}

First, let us recall some basic properties of subadditive functions.

\begin{lemma}
[{\cite[Lemmas 16.1.3, 16.1.4, 16.1.5]{Kuczma}}]\label{1} Let $X$ be a
group and $f:X\to\mathbb{R}$ be a~subadditive function. Then:
\begin{itemize}
\item[{(i)}] $f(0)\geq 0$;
\item[{(ii)}] $f(-x)\geq -f(x)$ for each $x\in X$.
\end{itemize}
\end{lemma}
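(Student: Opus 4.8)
The plan is to read off both assertions directly from the defining inequality $f(x+y)\le f(x)+f(y)$ by choosing the arguments cleverly; no auxiliary machinery is needed.

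For part (i) I would take $x=y=0$. Since $X$ is a group, $0+0=0$, so subadditivity gives $f(0)=f(0+0)\le f(0)+f(0)=2f(0)$. Because $f$ is real--valued (finite, as assumed throughout the paper), I may subtract $f(0)$ from both sides to conclude $0\le f(0)$. For part (ii), fix $x\in X$ and apply subadditivity to the pair $(x,-x)$; as $x+(-x)=0$ this yields $f(0)\le f(x)+f(-x)$. Combining with (i), i.e. with $f(0)\ge 0$, gives $0\le f(x)+f(-x)$, which rearranges to $f(-x)\ge -f(x)$.

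I do not anticipate any real obstacle here: the statement is an elementary consequence of the definition, and commutativity of $X$ is not even used. The only point that deserves a word of caution is that the cancellation of $f(0)$ in part~(i) relies on $f$ taking finite values; for extended--real--valued subadditive functions that step would require the customary separate treatment, but under the present conventions it is automatic.
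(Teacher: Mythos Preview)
Your proof is correct and is exactly the standard elementary argument. Note, however, that the paper does not actually supply a proof of this lemma: it merely quotes the statement from \cite[Lemmas~16.1.3--16.1.5]{Kuczma}. The argument you give --- substituting $x=y=0$ for part~(i) and then $y=-x$ together with~(i) for part~(ii) --- is precisely the textbook derivation found there, so there is nothing to compare beyond observing that your write--up matches the cited source.
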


We are now ready to prove the main result, which in the case of $\mathbb{R}$, was obtained in \cite[Theorem 2~(ii) and Remark]{BinO1}.

\begin{theorem}
\label{main} Let $X$ be an abelian metric group and $f:X\rightarrow \mathbb{R}$ a~subadditive function. If $f$ is bounded above on a set $T\subset X$
whose $k$--fold sum $\sum_{i=1}^kT$ is shift--compact for some $k\in \mathbb{N}$,
then $f$~is locally bounded at each point of~$X$.
\end{theorem}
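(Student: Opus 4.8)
The plan is to reduce to a cleaner hypothesis, then extract local upper boundedness at the origin directly from shift--compactness, and finally propagate boundedness to the whole group using subadditivity and Lemma~\ref{1}; no separate analogue of Theorem~\ref{t1} for metric groups is needed.

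First I would observe that if $f\leq M$ on $T$, then $f\leq kM$ on $A:=\sum_{i=1}^{k}T$: for $x=t_{1}+\cdots+t_{k}$ with each $t_{j}\in T$, subadditivity gives $f(x)\leq\sum_{j=1}^{k}f(t_{j})\leq kM$. Hence it suffices to prove the theorem under the assumption that $f$ is bounded above, say by a constant $K$, on a shift--compact set $A\subseteq X$.

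The key step is to show that $f$ is bounded above on some neighbourhood of $0$. Suppose not. Since $X$ is metric, for each $n\in\mathbb{N}$ we may choose $x_{n}$ with $d(x_{n},0)<1/n$ and $f(x_{n})>n$; thus $x_{n}\to 0$. By shift--compactness of $A$ there is $x\in X$ with $I:=\{n\in\mathbb{N}:x+x_{n}\in A\}$ infinite. For $n\in I$ we have $f(x+x_{n})\leq K$, while subadditivity gives $f(x_{n})=f\bigl((x+x_{n})+(-x)\bigr)\leq f(x+x_{n})+f(-x)\leq K+f(-x)$. This contradicts $f(x_{n})>n$ for the (unbounded) indices $n\in I$. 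Hence $f\leq M$ on some ball $U=B(0,r)$. Since the metric is invariant, $U$ is symmetric ($-U=U$), so for $u\in U$ Lemma~\ref{1}(ii) gives $f(u)\geq -f(-u)\geq -M$; thus $|f|\leq M$ on $U$. Finally, for arbitrary $a\in X$ and $u\in U$, subadditivity yields $f(a+u)\leq f(a)+f(u)\leq f(a)+M$ and $f(a+u)\geq -f(-a-u)\geq -f(-a)-f(-u)\geq -f(-a)-M$, so $f$ is bounded on the neighbourhood $a+U$ of $a$. This gives local boundedness of $f$ at every point of $X$.

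The only genuinely delicate point is the contradiction step: one must use shift--compactness to intersect the ``bad'' sequence $(x_{n})$ with a translate of $A$, and then invoke subadditivity in the form $f(x_{n})-f(-x)\leq f(x+x_{n})$ to transfer the upper bound available on $A$ into an upper bound on $f(x_{n})$. Everything else is routine manipulation of subadditivity together with translation invariance of the metric.
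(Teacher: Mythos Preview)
Your proof is correct and uses the same core mechanism as the paper: apply shift--compactness to a null sequence on which $f$ is unbounded above, then use subadditivity to transfer the upper bound $f\le kM$ on $\sum_{i=1}^{k}T$ into an upper bound on the values $f(x_n)$, yielding a contradiction. The only difference is organisational: you first establish two--sided local boundedness at $0$ (exploiting that balls in an invariant metric on an abelian group are symmetric) and then translate to an arbitrary point, whereas the paper argues directly at an arbitrary point $x_0$ by passing to the null sequence $y_n-y_0$ and handles the lower--unbounded case by negating the sequence via Lemma~\ref{1}(ii).
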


In the proof of the above result we use classical methods from \cite[proof of Theorem~16.2.]{Kuczma} and from \cite[proof of Theorem~2~(ii)]{BinO1}.

\begin{proof}
Suppose that $f$ is not locally bounded at the point $x_{0}\in X$. This
means that there is a sequence $(x_{n})_{n\in \mathbb{N}}$ with $x_{n}\rightarrow x_{0}$ and $|f(x_{n})|\rightarrow \infty $. Then we may
choose either a subsequence $(x_{n}^{\prime })_{n\in \mathbb{N}}$ of $(x_{n})_{n\in \mathbb{N}}$ with $f(x_{n}^{\prime })>n$ for each $n\in
\mathbb{N}$ or a~subsequence $(x_{n}^{\prime \prime })_{n\in \mathbb{N}}$ of
$(x_{n})_{n\in \mathbb{N}}$ with $f(x_{n}^{\prime \prime })<-n$ for each $n\in \mathbb{N}$.

In the first case, put $y_{n}:=x_{n}^{\prime }$ for $n\in \mathbb{N}$ and $y_{0}:=x_{0}$. In the second case, take $y_{n}:=-x_{n}^{\prime \prime }$ for
$n\in \mathbb{N}$ and $y_{0}:=-x_{0}$. By Lemma~\ref{1}, $f(-x_{n}^{\prime
\prime })\geq -f(x_{n}^{\prime \prime })>n$ for each $n\in \mathbb{N}$ so,
in both cases, there exists a sequence $y_{n}\rightarrow y_{0}$ such that $f(y_{n})>n$ for $n\in \mathbb{N}$. Since $y_{n}-y_{0}\rightarrow 0$ and $\sum_{i=1}^kT\subset X$ is shift--compact, there exists $z_{0}\in X$ such that the
set $\mathbb{N}_{0}:=\{n\in \mathbb{N}:z_{0}+y_{n}-y_{0}\in \sum_{i=1}^kT\}$ is
infinite. Moreover, by hypothesis, there exists a constant $M\in \mathbb{R}$
with $f(\sum_{i=1}^kx_{i})\leq \sum_{i=1}^{k}f(x_{i})\leq kM$ for each $x_{1},\ldots ,x_{k}\in T$; so $f(x)\leq kM$ for each $x\in \sum_{i=1}^kT$. Thus, by
Lemma~\ref{1},
\begin{equation*}
n<f(y_{n})\leq f(y_{n}-y_{0}+z_{0})+f(y_{0}-z_{0})\leq kM+f(y_{0}-z_{0})\end{equation*}
for each $n\in \mathbb{N}_{0}$, so $\mathbb{N}_{0}$ is finite, a
contradiction.
\end{proof}

Next we consider some applications of Theorem~\ref{main}.

Since each non--empty open set is shift--compact, we obtain the following
generali\-zation of Theorem~\ref{t1}.

\begin{corollary}
\label{c1} If $X$ is an abelian metric group and $f:X\rightarrow \mathbb{R}$
a subadditive function locally bounded above at some point, then $f$ is
locally bounded at every point of~$X$.
\end{corollary}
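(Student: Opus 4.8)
The plan is to derive Corollary~\ref{c1} directly from Theorem~\ref{main}, taking $k=1$. First I would unpack the hypothesis: to say that $f$ is bounded above locally at some point $p\in X$ means there is a neighbourhood of $p$ on which $\sup f<\infty$. Since $X$ is a topological group, this neighbourhood contains a non--empty open set $U\ni p$, and $f$ is of course still bounded above on $U$.

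Next I would record that $U$ is shift--compact, which is the elementary observation already noted in~\S1 ("non--empty open sets are shift--compact"). Concretely: fix $a\in U$; then $U-a$ is an open neighbourhood of $0$, so for any sequence $(x_{n})_{n\in\mathbb{N}}$ with $x_{n}\to 0$ we have $x_{n}\in U-a$, i.e. $a+x_{n}\in U$, for all sufficiently large $n$. Hence $\{n\in\mathbb{N}:a+x_{n}\in U\}$ is cofinite, in particular infinite, so $U$ is shift--compact with witness $x=a$.

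Finally I would apply Theorem~\ref{main} with $T:=U$ and $k:=1$: the $1$--fold sum $\sum_{i=1}^{1}T=U$ is shift--compact and $f$ is bounded above on $T$, so $f$ is locally bounded at each point of~$X$, which is the assertion; in particular this recovers Theorem~\ref{t1} when $X=\mathbb{R}^{N}$. I do not expect any genuine obstacle here---the statement is a special case of Theorem~\ref{main}; the only step deserving a word of care is the passage from "bounded above on a neighbourhood of $p$" to "bounded above on a non--empty open set", which is immediate in a topological group.
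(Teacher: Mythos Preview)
Your proposal is correct and follows exactly the paper's own route: the corollary is deduced from Theorem~\ref{main} with $k=1$ by taking $T$ to be a non--empty open set on which $f$ is bounded above, invoking the observation from \S1 that non--empty open sets are shift--compact. The paper gives only that one--line justification, while you spell out the verification of shift--compactness for open sets; nothing further is needed.
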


The above corollary is also obtained in the case of $\mathbb{R}$ in \cite[Theorem~R]{BinO7} (cf. also \cite[Lemma~4.3]{BinO2}), but the proof there
relies only on group structure, as here.

By Theorem~\ref{HN+HM}, in a complete abelian metric group each universally
Baire non--Haar--meagre set, and also each universally measurable
non--Haar--null set, is shift--compact. Thus we obtain the next result,
generalizing to some extent Theorem~\ref{t2}.

\begin{corollary}
\label{c2} If $X$ is a complete abelian metric group and $f:X\rightarrow
\mathbb{R}$  a~subadditive function bounded above on a set $T\subset X$ with some $k$--fold sum $\sum_{i=1}^kT$  either universally Baire and non--Haar--meagre, or universally measurable and non--Haar--null, then $f$ is locally bounded at each point of~$X$.
\end{corollary}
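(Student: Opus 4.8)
The plan is to deduce this directly from Theorem~\ref{main} together with Theorem~\ref{HN+HM}, thereby also settling Problem~\ref{el} in the affirmative. The point is that each of the two alternative hypotheses placed on the $k$--fold sum $\sum_{i=1}^kT$ already forces that sum to be shift--compact, and once shift--compactness is available Theorem~\ref{main} applies with no further work.

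First I would fix a $k\in\mathbb{N}$ for which $S:=\sum_{i=1}^kT$ has the stated property. If $S$ is universally Baire and non--Haar--meagre, then Theorem~\ref{HN+HM}~(i) gives that $S$ is shift--compact. If instead $S$ is universally measurable and non--Haar--null, then Theorem~\ref{HN+HM}~(ii) gives the same conclusion. In either case $\sum_{i=1}^kT$ is shift--compact. Since $f$ is subadditive and bounded above on $T$, and $X$ being a complete abelian metric group is in particular an abelian metric group, all the hypotheses of Theorem~\ref{main} are satisfied, and we conclude that $f$ is locally bounded at each point of $X$.

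The only points requiring attention are bookkeeping ones, and they are minor: one should note that completeness of $X$ is exactly what licenses the appeal to Theorem~\ref{HN+HM} (Theorem~\ref{main} itself needs only the group structure and a metric), and that the phrases ``non--Haar--meagre'' and ``non--Haar--null'' appearing here and in Problem~\ref{el} are to be read, in conjunction with the measurability clauses, as ``universally Baire but not Haar--meagre'' and ``universally measurable but not Haar--null'' respectively, so that the hypotheses line up precisely with those of Theorem~\ref{HN+HM}. There is no substantive obstacle: the entire analytic content is carried by Theorem~\ref{main}, and this corollary merely records the two measure-- and category--theoretic instances of shift--compactness supplied by Theorem~\ref{HN+HM}.
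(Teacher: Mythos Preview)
Your proposal is correct and matches the paper's own argument essentially verbatim: the paper derives the corollary by invoking Theorem~\ref{HN+HM} to obtain shift--compactness of $\sum_{i=1}^kT$ and then applying Theorem~\ref{main}. Your additional remarks on the role of completeness and the reading of the measurability clauses are accurate and helpful but not required.
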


\section{A connection with generic subadditive functions}

In 2008 N.H. Bingham and A.J. Ostaszewski \cite{BO} (see also \cite{BinO?}, \cite{BinO??} ) introduced the notion of Weak No Trumps functions (called WNT--functions).

\begin{definition}
Let $f:X\rightarrow \mathbb{R}$ be defined on an abelian metric
group\,\footnote{\,In fact, Bingham and Ostaszewski defined a WNT--function in the case $X=\mathbb{R}^{N}$.} and $H^{k}:=f^{-1}(-k,k)$ for $k\in \mathbb{N}$. Call $f$
a \textbf{WNT--function} if for every convergent sequence $(u_{n})_{n\in
\mathbb{N}}$ in $X$ there exist $k\in \mathbb{N}$, an infinite set $\mathbb{M}\subset \mathbb{N}$ and $t\in X$ such that $\{t+u_{m}:m\in \mathbb{M}
\}\subset H^{k}$.
\end{definition}

The more basic No Trumps combinatorial principle, denoted {NT}, refers to a~family of
subsets of reals $\{T_k: k\in\mathbb{N}\}$ (see \cite[Definition~2]{BinO?}).
The function class WNT is one of a hierarchy introduced in \cite{BinO??} and is
so named as it refers to the weakest condition in the hierarchy.

One readily observes the following.

\begin{lemma}
A function $f:X\rightarrow \mathbb{R}$ defined on an abelian metric group $X$
is {\em WNT} if and only if for every sequence $(u_{n})_{n\in \mathbb{N}}$
convergent to $0$ in $X$ there exist $k\in \mathbb{N}$, an~infinite set $\mathbb{M}\subset \mathbb{N}$ and $t\in X$ such that $\{t+u_{m}:m\in \mathbb{M}\}\subset H^{k}$.
\end{lemma}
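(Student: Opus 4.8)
The plan is to prove the two directions of the equivalence separately, where one direction is trivial and the other requires a small observation about sequences converging to $0$. Recall that $f$ is WNT iff for \emph{every} convergent sequence $(u_n)_{n\in\mathbb{N}}$ in $X$ there are $k\in\mathbb{N}$, an infinite $\mathbb{M}\subset\mathbb{N}$ and $t\in X$ with $\{t+u_m:m\in\mathbb{M}\}\subset H^k$; the claimed reformulation restricts attention only to sequences convergent to $0$.

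First I would treat the easy implication: if $f$ is WNT in the original sense, then in particular the condition holds for every sequence convergent to $0$, since such a sequence is convergent. Hence the reformulated condition holds, and nothing further is needed here.

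For the converse, suppose the reformulated condition holds, and let $(u_n)_{n\in\mathbb{N}}$ be an arbitrary sequence convergent to some $u_0\in X$ (not necessarily $0$). Put $v_n:=u_n-u_0$ for $n\in\mathbb{N}$; then $v_n\to 0$ in $X$ because the metric is invariant (translation by $-u_0$ is continuous). By hypothesis applied to $(v_n)_{n\in\mathbb{N}}$ there exist $k\in\mathbb{N}$, an infinite $\mathbb{M}\subset\mathbb{N}$ and $s\in X$ with $\{s+v_m:m\in\mathbb{M}\}\subset H^k$. Now set $t:=s-u_0$. Then for each $m\in\mathbb{M}$ we have $t+u_m = s-u_0+u_m = s+v_m \in H^k$, so $\{t+u_m:m\in\mathbb{M}\}\subset H^k$. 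This is exactly the WNT condition for the sequence $(u_n)_{n\in\mathbb{N}}$, so $f$ is WNT.

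The only (minor) obstacle is bookkeeping: one must use that $X$ is a metric \emph{group} with invariant metric so that $u_n\to u_0$ implies $u_n-u_0\to 0$, and then absorb the shift $u_0$ into the free translation parameter $t$. There is no real difficulty; the lemma is essentially a change of variables $t\mapsto t-u_0$ together with the observation that the truncation sets $H^k=f^{-1}(-k,k)$ play no role in the argument beyond being fixed subsets of $X$.
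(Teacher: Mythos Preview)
Your proof is correct and is exactly the natural argument: the paper itself gives no explicit proof (it merely introduces the lemma with ``One readily observes the following''), so your shift $v_n=u_n-u_0$ followed by absorbing $u_0$ into the translation parameter $t$ is precisely what the reader is expected to supply.
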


Now let us present some connections between the WNT--property, boun\-dedness on a
shift--compact set, and local boundedness at a point.

\begin{proposition}
\label{WNT} Let $f:X\rightarrow \mathbb{R}$ be defined on an abelian metric
group. Then the following implications hold:
\begin{itemize}
\item[{(i)}] if $f$ is locally bounded at a point, then $f$ is bounded on a shift--compact set in~$X$;
\item[{(ii)}] if $f$ is bounded on a shift-compact set in $X$, then $f$ is \emph{WNT}.
\end{itemize}
\end{proposition}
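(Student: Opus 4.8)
The plan is to prove the two implications of Proposition~\ref{WNT} separately, both by reducing to the relevant definitions.

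\medskip

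For part~(i), suppose $f$ is bounded on a neighbourhood $U$ of some point $x_0\in X$, say $|f|\le c$ on $U$. I claim that $U$ itself is the required shift--compact set. Indeed, $U$ contains an open ball $B(x_0,r)$, and every non--empty open set is shift--compact (as already noted after Theorem~\ref{main}); since any superset of a shift--compact set is clearly shift--compact, $U$ is shift--compact, and $f$ is bounded on it by construction. So part~(i) is essentially immediate once one recalls that open sets are shift--compact.

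\medskip

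For part~(ii), assume $f$ is bounded on a shift--compact set $A$, say $|f|\le c$ on $A$, and take any sequence $(u_n)_{n\in\mathbb{N}}$ converging to $0$ in $X$; by the Lemma preceding the Proposition it suffices to treat this case. Since $A$ is shift--compact, there exists $t\in X$ such that $\mathbb{M}:=\{n\in\mathbb{N}:t+u_n\in A\}$ is infinite. For $m\in\mathbb{M}$ we then have $|f(t+u_m)|\le c$, so choosing any integer $k>c$ gives $\{t+u_m:m\in\mathbb{M}\}\subset f^{-1}(-k,k)=H^k$. This is exactly the WNT--condition, so $f$ is WNT.

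\medskip

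I do not anticipate a genuine obstacle here: both implications are direct unwindings of the definitions, the only mild subtlety being to notice in~(i) that shift--compactness is inherited by supersets (so that a full neighbourhood, not just an open ball, qualifies) and to invoke the Lemma in~(ii) so that it is enough to handle null sequences rather than arbitrary convergent ones. If any step deserves emphasis, it is the observation that the constant $k$ in the definition of WNT may be chosen uniformly as any integer exceeding the bound $c$, which is what makes the passage from ``bounded on $A$'' to ``$H^k$ contains a translated subsequence'' work.
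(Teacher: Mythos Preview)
Your proof is correct and follows essentially the same route as the paper's: part~(i) reduces to the fact that non--empty open sets are shift--compact, and part~(ii) directly applies the definition of shift--compactness to a null sequence to land a translated subsequence inside $H^k$ for a fixed $k$ exceeding the bound. The only cosmetic differences are your superset remark in~(i) (unnecessary, since the open ball itself already works) and your explicit appeal to the Lemma in~(ii), which the paper uses implicitly.
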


\begin{proof}
(i) It is an easy consequence of the fact that open sets are shift--compact.

(ii) Assume that for some shift--compact set $D\subset X$ there exists $k\in
\mathbb{N}$ such that $f(D)\subset (-k,k)$. Since $D$ is shift--compact, for
each sequence $(x_{n})_{n\in \mathbb{N}}$ tending to $0$ in $X$ there are $x_{0}\in X$ and an infinite set $\mathbb{N}_{0}\subset \mathbb{N}$ such that
$f(x_{n}+x_{0})\in f(D)\subset (-k,k)$ for each $n\in \mathbb{N}_{0}$.
Consequently, for each sequence $(x_{n})_{n\in \mathbb{N}}$ convergent to $0$
there are $k\in \mathbb{N}$, $x_{0}\in X$ and an infinite set $\mathbb{N}_{0}\subset \mathbb{N}$ such that $x_{n}+x_{0}\in f^{-1}(-k,k)$ for $n\in
\mathbb{N}_{0}$.  So $f$ is WNT.
\end{proof}

Note that the converse implication to (i) in Proposition \ref{WNT}
does not hold.

\begin{example}
Define a function $g:[0,1)\rightarrow \mathbb{R}$ by
\begin{equation*}
g(x):=\left\{
\begin{array}{ccl}
(-1)^{n}n, &\;\; \mbox{ for} & x={m}/{n}\in \mathbb{Q}\cap (0,1),\;\mbox{where}\;\gcd \,(m,n)=1, \\[1ex]
0, &\;\; \mbox{ for} & x\in \{0\}\cup \lbrack (0,1)\setminus \mathbb{Q}].
\end{array}
\right.
\end{equation*}
First we prove that $g$ is locally unbounded at each point of $[0,1)$.
Indeed, for each $x\in \lbrack 0,1)$ every open neighbourhood $U_{x}\subset
\lbrack 0,1)$ of $x$ contains infinitely many positive rational numbers,
hence $\sup_{t\in U_{x}}|g(t)|=\infty $.

Next, define a function $f:\mathbb{R}\rightarrow \mathbb{R}$ by
\begin{equation*}
f(x):=g(x-[x])\;\;\mbox{ for}\;\;x\in \mathbb{R},
\end{equation*}
with $[x]$ the integer part of~$x\in \mathbb{R}$. Clearly, $f$ is locally
unbounded at each point of~$\mathbb{R}$.

Moreover, by Theorem~\ref{HN+HM}, the set $\mathbb{R}\setminus \mathbb{Q}$
is shift--compact (as it has positive Lebesgue measure) and $f(\mathbb{R}
\setminus\mathbb{Q})=\{0\}$. Thus, according to Proposition~\ref{WNT}~(ii), $f$ is WNT.
\end{example}

Note also that there exists a function $f:\mathbb{R}\to\mathbb{R}$ bounded
above on a~shift--compact set in $\mathbb{R}$ which is not WNT. It means that we are not able to weaken the assumption in Proposition~\ref{WNT}~(ii).

\begin{example}
\label{ex2} Let $f:\mathbb{R}\to\mathbb{R}$ be given by $f(x):=-|g(x)|$ for $x\in\mathbb{R}$ with a~discontinuous additive function $g:\mathbb{R}\to
\mathbb{R}$. Clearly then $f$ is bounded above on $\mathbb{R}$. We have to show yet that $f$ is not
WNT.

Since $g$ is additive and discontinuous, its graph is dense in $\mathbb{R}^2$ (see \cite[Theorem~12.1.2]{Kuczma}). So, there exists a sequence $(u_n)_{n\in\mathbb{N}}$ convergent to $0$ such that $|g(u_n)|>n$ for every $n\in\mathbb{N}$. Fix $k\in\mathbb{N}$ and $t\in \mathbb{R}$. Then
$$
\begin{array}{ll}
\{n\in\mathbb{N}:|f(t+u_n)|<k\}\!\!\!\!&=\{n\in\mathbb{N}:-k-g(t)<g(u_n)<k-g(t)\}\\[1ex]
&\subset \{n\in\mathbb{N}:|g(u_n)|<\max \{|k-g(t)|,|k+g(t)|\}\}.
\end{array}
$$
Hence the set $\{n\in\mathbb{N}:t+u_n\in f^{-1}((-k,k))\}$ is finite, which means that $\{t+u_n:n\in\mathbb{M}\} \not\subset H^k$ for every infinite set $\mathbb{M}\subset \mathbb{N}$.
\end{example}


In view of Example~\ref{ex2} we see that Theorem~\ref{main} can not be derived from the following result obtained in \cite{BO}.

\begin{proposition}
[{\cite[Proposition~1]{BO}}]\label{BO1} If $f:\mathbb{R}^N\to\mathbb{R}$
is a subadditive \emph{WNT}--function, then $f$ is locally bounded at each
point of $\mathbb{R}^N$.
\end{proposition}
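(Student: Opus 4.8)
The plan is to mimic the proof of Theorem~\ref{main}, using the WNT property of $f$ in place of the shift--compactness of a $k$--fold sum. First I would argue by contradiction and suppose that $f$ fails to be locally bounded at some point $x_0\in\mathbb{R}^N$; then there is a sequence $x_n\to x_0$ with $|f(x_n)|\to\infty$.

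Next I would perform the same one--sided reduction as in Theorem~\ref{main}. Passing to a subsequence we may assume that either $f(x_n')>n$ for all $n$, or $f(x_n'')<-n$ for all $n$. In the first case put $y_n:=x_n'$; in the second put $y_n:=-x_n''$, so that Lemma~\ref{1}(ii) gives $f(y_n)\ge -f(x_n'')>n$. In both cases $(y_n)_{n\in\mathbb{N}}$ is a convergent sequence in $\mathbb{R}^N$ with $f(y_n)>n$ for every $n\in\mathbb{N}$.

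Now I would apply the WNT property to the convergent sequence $(y_n)$: there exist $k\in\mathbb{N}$, an infinite set $\mathbb{M}\subset\mathbb{N}$ and $t\in\mathbb{R}^N$ with $\{t+y_m:m\in\mathbb{M}\}\subset H^k=f^{-1}(-k,k)$, so in particular $f(t+y_m)<k$ for all $m\in\mathbb{M}$. Subadditivity then yields, for every $m\in\mathbb{M}$,
\[
m < f(y_m) = f\bigl((t+y_m)+(-t)\bigr) \le f(t+y_m)+f(-t) < k+f(-t),
\]
which is impossible as soon as $m\in\mathbb{M}$ exceeds the fixed constant $k+f(-t)$. This contradiction establishes local boundedness of $f$ at $x_0$.

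I do not anticipate a serious obstacle: the only delicate point is the passage from $|f(x_n)|\to\infty$ to a sequence along which $f$ tends to $+\infty$, handled exactly as in Theorem~\ref{main} via Lemma~\ref{1}(ii); after that the argument is a one--line subadditivity estimate, using only the upper bound $f(t+y_m)<k$ supplied by WNT and the fact that $f(-t)$ does not depend on $m$. (Note that the same proof works verbatim for any abelian metric group $X$, so the hypothesis $X=\mathbb{R}^N$ is not actually needed.)
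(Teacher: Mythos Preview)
Your proof is correct. Note, however, that the present paper does not supply its own proof of Proposition~\ref{BO1}; the result is merely quoted from \cite{BO}, so there is no in--paper argument to compare against. Your argument is exactly the natural adaptation of the proof of Theorem~\ref{main}: the same one--sided reduction via Lemma~\ref{1}(ii) produces a convergent sequence $(y_n)$ with $f(y_n)>n$, and then the WNT hypothesis plays precisely the role that shift--compactness of $\sum_{i=1}^k T$ played there, furnishing a shift $t$ and an infinite $\mathbb{M}$ along which $f(t+y_m)<k$, after which subadditivity gives the contradiction $m<f(y_m)\le f(t+y_m)+f(-t)<k+f(-t)$. As you remark, nothing uses the structure of $\mathbb{R}^N$ beyond that of an abelian metric group.
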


We have shown that generally, for every $f:X\rightarrow \mathbb{R}$
defined on an~abelian metric group,
\begin{equation*}
\begin{array}{ccccc}
& \!\!\!&\!\!\! \begin{array}{c}
\mbox{boundedness}\\[-.5ex]
\mbox{on a shift--compact set}
\end{array} & & \\
& \Nearrow\,\backslash\hspace{-.35cm}\Swarrow & & \Searrow & \\
\begin{array}{c}
\mbox{local boundedness} \\[-.5ex]
\mbox{at some point}
\end{array}& &\!\!\! {\Large\mbox{$\Downarrow$~$\diagdown\hspace{-.33cm}\Uparrow$}} & & \mbox{WNT}\\
& \Searrow\,\slash\hspace{-.35cm}\Nwarrow & & \backslash\hspace{-.35cm}\Nearrow & \\
& & \begin{array}{c}
\mbox{boundedness above}\\[-.5ex]
\mbox{on a shift--compact set}
\end{array}& & \\[5pt]
\end{array}
\end{equation*}

\begin{problem}
Does every WNT function have to be bounded (above) on a shift--compact set?
\end{problem}

Nevertheless, combining Theorem \ref{main} and Propositions \ref{WNT} and \ref{BO1},
we deduce that the situation is completely different in the class of {\em subadditive functions}.

\begin{corollary}
Let $f:X\rightarrow \mathbb{R}$ be a subadditive function defined on an abelian metric
group. The following conditions are equivalent:
\begin{itemize}
\item $f$ is locally bounded at some point;
\item $f$ is WNT;
\item $f$ is bounded above on a shift--compact set;
\item $f$ is bounded on a shift--compact set.
\end{itemize}
\end{corollary}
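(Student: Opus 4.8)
The plan is to verify the equivalence by chaining together the implications already established, together with one new implication supplied by Proposition~\ref{BO1}'s analogue in the abelian metric group setting. First I would observe that the implication ``$f$ bounded on a shift--compact set $\Rightarrow$ $f$ bounded above on a shift--compact set'' is trivial, and that ``$f$ bounded above on a shift--compact set $\Rightarrow$ $f$ is WNT'' is exactly Proposition~\ref{WNT}~(ii), while ``$f$ locally bounded at some point $\Rightarrow$ $f$ bounded on a shift--compact set'' is Proposition~\ref{WNT}~(i). These give the cycle of implications among the first, third, and fourth bullets once we also close the loop back to local boundedness.

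The crucial new step is to show that for a \emph{subadditive} $f$, being WNT forces local boundedness at some point. The argument: if $f$ is WNT, apply the definition to a constant (or trivially convergent to~$0$) sequence $u_n=0$ to obtain $k\in\mathbb{N}$, an infinite $\mathbb{M}\subset\mathbb{N}$ and $t\in X$ with $\{t+u_m:m\in\mathbb{M}\}\subset H^k$; but this only tells us $f(t)<k$, which is not enough. Instead one argues by contradiction: suppose $f$ is \emph{nowhere} locally bounded above. Then at each point $x$ there is a sequence approaching $x$ along which $f\to+\infty$; using subadditivity in the style of the proof of Theorem~\ref{main}, one manufactures a sequence $u_n\to 0$ such that for every $t\in X$ and every $k\in\mathbb{N}$ only finitely many translates $t+u_m$ lie in $H^k=f^{-1}(-k,k)$. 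This contradicts the WNT property. Concretely, pick $x_n\to x_0$ with $f(x_n)>n$ (replacing $x_n$ by $-x_n$ and using Lemma~\ref{1}~(ii) if $f$ tends to $-\infty$ instead), set $u_n:=x_n-x_0$; for any $t\in X$ and $k\in\mathbb{N}$, subadditivity gives $f(x_n)=f(u_n+x_0)\le f(u_n+t)+f(x_0-t)$, so $f(u_n+t)\ge n-f(x_0-t)\to+\infty$, whence $u_n+t\in H^k$ for only finitely many $n$. Thus $f$ is locally bounded above at some point, and then Corollary~\ref{c1} (or Theorem~\ref{main} with $k=1$ and $T$ an open set) upgrades this to local boundedness at every point, in particular at some point. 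This closes the equivalence.

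I expect the main obstacle to be a slightly delicate point in the contradiction argument: one must be careful that ``$f$ not locally bounded at $x_0$'' yields a one--signed subsequence ($f\to+\infty$ or $f\to-\infty$), and in the $-\infty$ case one passes to $-x_n\to -x_0$ with $f(-x_n)\ge -f(x_n)>n$, exactly as in the proof of Theorem~\ref{main}; this bookkeeping is routine but needs stating. A second, more cosmetic point is that Proposition~\ref{BO1} is stated only for $X=\mathbb{R}^N$, so rather than invoking it directly we re--run its (short) proof in the abelian metric group setting --- which is precisely the contradiction argument above. With these pieces in place, the four conditions are seen to be pairwise equivalent:
\begin{equation*}
\text{loc.\ bdd.\ at some point}\Rightarrow\text{bdd.\ on shift--compact}\Rightarrow\text{bdd.\ above on shift--compact}\Rightarrow\text{WNT}\Rightarrow\text{loc.\ bdd.\ at some point},
\end{equation*}
where the first implication is Proposition~\ref{WNT}~(i), the second is trivial, the third is Proposition~\ref{WNT}~(ii), and the last is the new step using subadditivity.
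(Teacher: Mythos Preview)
Your chain of implications has a genuine break. You claim that ``$f$ bounded above on a shift--compact set $\Rightarrow$ $f$ is WNT'' is exactly Proposition~\ref{WNT}~(ii), but that proposition assumes \emph{two--sided} boundedness on a shift--compact set, not merely boundedness above. The distinction is real: Example~\ref{ex2} exhibits a (non--subadditive) function bounded above on all of $\mathbb{R}$ which is \emph{not} WNT, so ``bounded above on a shift--compact set $\Rightarrow$ WNT'' is simply false without subadditivity, and Proposition~\ref{WNT}~(ii) does not supply it. Consequently your displayed cycle
\[
\text{loc.\ bdd.}\Rightarrow\text{bdd.\ on s--c}\Rightarrow\text{bdd.\ above on s--c}\Rightarrow\text{WNT}\Rightarrow\text{loc.\ bdd.}
\]
is not closed as justified. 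The missing ingredient is precisely Theorem~\ref{main}: for subadditive $f$, boundedness above on a shift--compact set yields local boundedness at every point. You mention Theorem~\ref{main} only parenthetically (inside the WNT~$\Rightarrow$~loc.\ bdd.\ step), never to handle condition~(3); but it is the link you need. A corrected scheme is: (1)~$\Rightarrow$~(4) by Proposition~\ref{WNT}~(i); (4)~$\Rightarrow$~(2) by Proposition~\ref{WNT}~(ii); (4)~$\Rightarrow$~(3) trivially; (3)~$\Rightarrow$~(1) by Theorem~\ref{main}; and (2)~$\Rightarrow$~(1) by your direct argument. This is essentially the paper's own route.

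Your direct proof of ``WNT $\Rightarrow$ locally bounded'' for subadditive $f$ on an abelian metric group is correct and is in fact an improvement over the paper, which simply cites Proposition~\ref{BO1} even though that is stated only for $\mathbb{R}^{N}$. The contrapositive argument you give (produce $u_n\to 0$ with $f(u_n+t)\ge n-f(x_0-t)\to\infty$ for every $t$) is clean and works verbatim in any abelian metric group; it is the natural extension of the proof in \cite{BO}. So keep that part, but repair the cycle by inserting Theorem~\ref{main} at the step from ``bounded above on a shift--compact set'' back to ``locally bounded''.
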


\section{Concluding remarks}

~~~~\textbf{4.1.} This joint paper arose out of a newfound interest in shift--compactness.
The concept was isolated a decade ago in establishing a common proof for the
two known `generic cases' of measure and category of the Uniform Convergence
Theorem for regularly varying functions (for background see \cite{BinGT} and
\cite{BinO?}). Most recently one of the current authors in collaboration
initially with T. Banakh inititated the study of a common generalization
of Christensen's notion of Haar--null sets and Darji's notion of Haar--meagre
sets by replacing the relevant $\sigma $--ideals by other ideals $\mathcal{I}$
and working in the general context of abelian metric groups. This yielded
Haar--$\mathcal{I}$ sets, and led in particular to the notion of null--finite
sets (corresponding to the ideal of finite subsets). A null--finite set is
simply a non--shift--compact set and this explains the recent resurgence of
interest in shift--compactness in metric groups, and the new results as in
\cite{MilMO}. Results on subadditivity using shift--compactness have been studied
in various publications in the last decade typically in Euclidean contexts,
so it seemed natural to collect together the known results from the widely
scattered literature, in the more natural contexts here of groups or linear
spaces.
\vspace{0.2cm}

\textbf{4.2.} Use of large sets in the contexts of additivity, subadditivity, and
convexity may be traced back to the work of R.~Ger and M.~Kuczma on `test sets'
(the Ger--Kuczma classes of test sets $\mathfrak{A,B,C}$ \cite[Chapters~9,10]{Kuczma}), the idea being that a property holding on a test set would
automatically imply a related property globally, as in the well--known
related case of \textit{automatic continuity} (for which see H.G. Dales
\cite{Dal}, \cite{Dal1} and \cite{BacBCDT}). Such ideas were also pursued by Z.~Kominek on the basis of result by
B.~Jones (test set there being capable of spanning the reals, as with
Hamel bases but not coincidental with these; for recent work on Hamel bases
see e.g. \cite{CieP1}, \cite{CieP2}, \cite{DorFN}). An alternative approach to test sets was
developed also in \cite{BinO-10} with connections to uniformity results in the
theory of regular variation.

For further thematic uses of large sets (in connection with regularity, i.e. `smoothness', properties of functions) see the extended arXiv version of this paper.

\vspace{0.2cm}

\bigskip

\textbf{Postscript.} With sadness we dedicate this paper to the memory of Harry I.~Miller, friend and collaborator, recently passed away -- a latter--day pioneer of shift--compactness (see \cite{Mil}).


\begin{thebibliography}{99}

\bibitem{BacBCDT} J.M. Bachar, W.G. Bade, P.C. Curtis Jnr, H.G. Dales, M.P. Thomas, \textit{Radical Banach Algebras and Automatic Continuity}, Lect. Notes Math. 975, Springer 1981.

\bibitem{BJ} T. Banakh, E. Jab{\l}o\'{n}ska, \textit{Null--finite sets in metric
groups and their applications}, Israel J. Math. \textbf{230} (2019), 361--386.

\bibitem{Berz} E. Berz, \textit{Sublinear functions on $\mathbb{R}$},
Aequationes Math. \textbf{12} (1975), 200--206.


\bibitem{BinGT} N.H. Bingham, C.M. Goldie, J.L. Teugels, \textit{Regular variation}, 2nd ed., Cambridge University Press, 1989 (1st ed. 1987).

\bibitem{BO} N.H. Bingham, A.J. Ostaszewski, \textit{Generic subadditive
functions}, Proc. Amer. Math. Soc. \textbf{136 (12)} (2008), 4257--4266.

\bibitem{BinO1} N.H. Bingham, A.J. Ostaszewski, \textit{Automatic continuity:
subadditivity, convexity, uniformity}, Aequationes Math. \textbf{78 (3)} (2009),
257--270.

\bibitem{BinO?} N.H. Bingham, A.J. Ostaszewski, \textit{Infinite combinatorics and the foundations of regular variation}, J.
Math. Anal. App. \textbf{360} (2009), 518--529.


\bibitem{BinO??} N.H. Bingham, A.J. Ostaszewski, \textit{Beyond Lebesgue and Baire:\ Generic regular variation},
Colloq. Math. \textbf{116} (2009), 119--138.

\bibitem{BinO2} N.H. Bingham, A.J. Ostaszewski, \textit{Normed groups:
Dichotomy and duality}, Dissert. Math. \textbf{472} (2010), 138p.


\bibitem{BinO-10} N.H. Bingham, A.J. Ostaszewski, \textit{Automatic
continuity by analytic thinning}, Proc. Amer. Math. Soc. \textbf{138} (2010), 907--919.


\bibitem{BinO4} N.H. Bingham, A.J. Ostaszewski, \textit{Dichotomy and infinite
combinatorics: the theorems of Steinhaus and Ostrowski}, Math. Proc. Camb.
Phil. Soc. \textbf{150} (2011), 1--22.

\bibitem{BinO6} N.H. Bingham, A.J. Ostaszewski, \textit{Additivity,
subadditivity and linearity: Automatic continuity and quantifier weakening},
Indag. Math. (N.S.) \textbf{29} (2018), 687--713.

\bibitem{BinO7} N.H. Bingham, A.J. Ostaszewski, \textit{Variants of the Berz
sublinearity theorem}, Aequationes Math. \textbf{93 (2)} (2019), 351--369.


\bibitem{BinO8} N.H. Bingham, A.J. Ostaszewski, \textit{The Steinhaus--Weil
property: its converse, Solecki amenability and subcontinuity},
arXiv:1607.00049v3 [math.GN].

\bibitem{BinO9} N.H. Bingham, A.J. Ostaszewski, \textit{General Regular
Variation, Popa Groups and Quantifier Weakening}, J. Math. Anal. Appl. \textbf{483 (2)} (2020), 123610.


\bibitem{Ch} J.P.R. Christensen, \textit{On sets of Haar measure zero in
abelian Polish groups}, Israel J. Math. \textbf{13} (1972), 255--260.

\bibitem{CieP1} K. Ciesielski, J. Pawlikowski, \textit{The Covering Property Axiom, CPA}, Cambridge Tracts Math. 164, CUP, 2004.

\bibitem{CieP2} K. Ciesielski, J. Pawlikowski, \textit{Nice Hamel bases under the covering property axiom}, Acta Math. Hungar. \textbf{105 (3)} (2004), 197--213.

\bibitem{CieR} K. Ciesielski, J. Rosenblatt, \textit{Restricted continuity and
a theorem of Luzin}, Colloq. Math. \textbf{135} (2014), 211--225.


\bibitem{Cooper} R. Cooper, \textit{The converse of the Cauchy--H\"{o}lder
inequality and the solution of the inequality $g(x+y)\leq g(x)+g(y)$}, Proc.
London Math. Soc. \textbf{26} (1927), 415--432.

\bibitem{Dal} H.G. Dales, \textit{Automatic continuity: a survey}, Bull. London Math. Soc. \textbf{10.2} (1978), 129--183.

\bibitem{Dal1} H.G. Dales, \textit{Banach algebras and automatic continuity}, London Mathematical Society Monographs, vol. 24, Clarendon Press, Oxford, 2000.

\bibitem{Darji} U.B. Darji, \textit{On Haar meager sets}, Topology Appl. \textbf{160}
(2013), 2396--2400.


\bibitem{DorFN} F. Dorais, R. Filip\'{o}w, T. Natkaniec, \textit{On some
properties of Hamel bases and their applications to Marczewski measurable
functions}, Open Math. \textbf{11~(3)} (2013), 487--508.

\bibitem{Magidor}
Q. Feng, M. Magidor, H. Woodin, \textit{Universally Baire sets of reals} in: {\em Set theory of the continuum} (H. Judah, W. Just, H. Woodin, eds.), 203--242, Math. Sci. Res. Inst. Publ. 26, Springer, New York, 1992.

\bibitem{Hey} H. Heyer, \textit{Probability measures on locally compact groups}, Ergebnisse Math. Grenzgebiete 94, Springer, 1977.

\bibitem{Hille} E. Hille, R.S. Phillips, \textit{Functional analysis and
semigroups}, American Mathematical Society Colloquium Publications XXXI,
A.M.S., Providence, R.I., 1957.

\bibitem{Jab} E. Jab{\l}o\'{n}ska, \textit{Some analogies between Haar meager sets
and Haar null sets in abelian Polish groups}, J. Math. Anal. Appl. \textbf{421}
(2015), 1479--1486.

\bibitem{Kechris}
A.S.~Kechris, \textit{Classical Descriptive Set Theory}, Springer--Verlag, New York 1995.

\bibitem{Kes} H. Kestelman, \textit{The convvergent sequence belonging to a set}, J. London Math. Soc. \textbf{22} (1947), 130--136.

\bibitem{Kuczma} M. Kuczma, \textit{An introduction to the theory of
functional equations and inequalities. Cauchy's Equation and Jensen's
Inequality}, 2nd edition, Birkh\"{a}user Verlag, Basel, 2009, edited by A.~Gil\'{a}nyi.

\bibitem{Kwela}
A. Kwela, \textit{Haar--smallest sets}, Topology Appl. \textbf{267} (2019), 106892.

\bibitem{Matkowski} J. Matkowski, T. \'{S}wi\c{a}tkowski, \textit{On subadditive
functions}, Proc. Amer. Math. Soc. \textbf{119} (1993), 187--197.

\bibitem{Matkowski1} J. Matkowski, \textit{Subadditive periodic functions},
Opuscula Math. \textbf{31} (2011), 75--96.

\bibitem{MZaj} E. Matou\v{s}kov\'{a}, L. Zaji\v{c}ek, \textit{Second order
differentiability and Lipschitz smooth points of convex functionals},
Czechoslovak Math. J. \textbf{48} (1998), 617--640.

\bibitem{Mil} H.I. Miller, \textit{Generalization of a result of Borwein and Ditor}, Proc. Amer. Math. Soc. \textbf{105.4} (1989), 889-893.

\bibitem{MilO} H.I. Miller, A.J. Ostaszewski, \textit{Group action and
shift--compactness}, J. Math. Anal. Appl. \textbf{392 (1)} (2012), 23--39.

\bibitem{MilMO} H.I. Miller, L. Miller--Van Wieren, A.J. Ostaszewski, \textit{Beyond Erd\"{o}s--Kunen--Mauldin: Singular sets with shift--compactness
properties}, arXiv:1901.09654.

\bibitem{Par} K.R. Parthasarathy, \textit{Probability measures on metric spaces}, Academic Press, 1967.

\bibitem{Pet} B.J. Pettis, \textit{Remarks on a theorem of E.J. McShane},
Proc. Amer. Math. Soc. \textbf{2} (1951), 166--171.

\bibitem{Pic} S. Piccard, \textit{Sur les ensembles de distances des ensembles
de points d'un espace Euclidien}, Mem. Univ. Neuch\^{a}tel, vol. 13, Secr\'{e}tariat
Univ., Neuch\^{a}tel, 1939.

\bibitem{Rosenbaum} R.A. Rosenbaum, \textit{Sub--additive functions}, Duke
Math. J. \textbf{17}, (1950), 227--247.

\bibitem{Stein} H. Steinhaus, \textit{Sur les distances des points des ensembles de mesure positive}, Fund. Math.~\textbf{1} (1920), 99--104.
\end{thebibliography}
\end{document}